\theoremstyle{plain}
 \newtheorem{theorem}{Theorem}[section]
 \newtheorem{lemma}{Lemma}[section]
 \newtheorem{conjecture}{Conjecture}[section]
\theoremstyle{definition}
\newcommand{\mbbZ}{\mathbb{Z}}
\newcommand{\mbbR}{\mathbb{R}}
\title{On some conjectures of Samuels and Feige}
\author{Roland Paulin}
\address{Max-Planck-Institut f\"ur Mathematik \\
Vivatsgasse 7 \\
D-53111, Bonn \\
Germany}
\email{paulinroland@gmail.com}
\thanks{}
\subjclass[2010]{}
\keywords{}
\date{\today}
\begin{document}

\begin{abstract}
Let $\mu_1 \ge \dotsc \ge \mu_n > 0$ and $\mu_1 + \dotsm + \mu_n = 1$.
Let $X_1, \dotsc, X_n$ be independent non-negative random variables with $EX_1 = \dotsc = EX_n = 1$, and let $Z = \sum_{i=1}^n \mu_i X_i$.
Let $M = \max_{1 \le i \le n} \mu_i = \mu_1$, and let $\delta > 0$ and $T = 1 + \delta$.
Both Samuels and Feige formulated conjectures bounding the probability $P(Z < T)$ from above.
We prove that Samuels' conjecture implies a conjecture of Feige.
\end{abstract}

\maketitle

\section{Introduction}

Let $n \in \mbbZ_{\ge 1}$, $\mu_1 \ge \dotsc \ge \mu_n > 0$ and $\mu_1 + \dotsm + \mu_n = 1$.
Let $X_1, \dotsc, X_n$ be independent non-negative random variables with $EX_1 = \dotsc = EX_n = 1$, and let $Z = \sum_{i=1}^n \mu_i X_i$.
Let $M = \max_{1 \le i \le n} \mu_i = \mu_1$, and let $\delta > 0$ and $T = 1 + \delta$.

Feige \cite{Feige}, \cite{Feige-preprint} proves a weaker version of the following conjecture, with the constant $\frac{1}{13}$ in place of $\frac{1}{e}$.
\begin{conjecture} \label{Conj:weak}
\[
P(Z < T) \ge \min\left(\frac{\delta}{\delta + M}, \frac{1}{e}\right).
\]
\end{conjecture}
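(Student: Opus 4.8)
The plan is to derive Conjecture~\ref{Conj:weak} from Samuels' conjecture, which determines the extremal joint laws in exactly this kind of Chebyshev-type problem. Put $Y_i = \mu_i X_i$; then $Y_1,\dots,Y_n$ are independent and non-negative, $E Y_i = \mu_i$, $\sum_{i=1}^n\mu_i = 1 < T$, and $P(Z<T) = 1 - P\bigl(\sum_{i=1}^n Y_i \ge T\bigr)$. So Conjecture~\ref{Conj:weak} is equivalent to
\[
\sup P\Bigl(\sum_{i=1}^n Y_i \ge T\Bigr)\ \le\ \max\Bigl(\frac{M}{\delta+M},\ 1-\frac{1}{e}\Bigr),
\]
the supremum being over all independent non-negative $(Y_1,\dots,Y_n)$ with $E Y_i = \mu_i$. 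Since $\sum_i\mu_i = 1 < T$, Samuels' conjecture gives an explicit family of extremal laws -- each coordinate two-valued -- attaining this supremum, so it suffices to evaluate the left side on that family.

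These extremal configurations are indexed by $k\in\{1,\dots,n\}$: the coordinates $Y_1,\dots,Y_k$ each take the value $0$ or $s_k := T - \sum_{i>k}\mu_i = \delta + \sum_{i\le k}\mu_i$, with $P(Y_i=s_k)=\mu_i/s_k$, while $Y_{k+1},\dots,Y_n$ are deterministic. For such a law $\sum_i Y_i\ge T$ holds precisely when at least one of $Y_1,\dots,Y_k$ is nonzero, so $P\bigl(\sum_i Y_i\ge T\bigr)=1-\prod_{i=1}^k(1-\mu_i/s_k)$. Writing $\sigma_k=\mu_1+\dots+\mu_k$, Conjecture~\ref{Conj:weak} thus reduces to
\[
\prod_{i=1}^k\Bigl(1-\frac{\mu_i}{\delta+\sigma_k}\Bigr)\ \ge\ \min\Bigl(\frac{\delta}{\delta+M},\ \frac{1}{e}\Bigr)\qquad(1\le k\le n).
\]
At $k=1$ the left side equals $\delta/(\delta+M)$, so the substance lies in the case $k\ge 2$.

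For $k\ge 2$ I would split on the size of $\delta$. When $\delta\ge 1/(e-1)$, the Weierstrass inequality $\prod(1-x_i)\ge 1-\sum x_i$ already gives $\prod_{i=1}^k(1-\mu_i/(\delta+\sigma_k))\ge\delta/(\delta+\sigma_k)\ge\delta/(\delta+1)\ge 1/e$, which is enough. When $\delta<1/(e-1)$ one must use more than the value of $\sum_i\mu_i/(\delta+\sigma_k)$: since $\prod_{i=1}^k(1-\mu_i/s_k)$ is Schur-concave in $(\mu_1,\dots,\mu_k)$, for fixed $\sigma_k$ and subject to $\mu_i\le M$ it is smallest at the configuration with as many coordinates equal to $M$ as possible (the rest vanishing apart from one remainder), and the problem collapses to a one-variable estimate in $\sigma_k\in[M,1]$. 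That estimate -- comparing a quantity of the shape $(\delta+\sigma-M)^{\lfloor\sigma/M\rfloor}(\delta+\lfloor\sigma/M\rfloor M)/(\delta+\sigma)^{\lfloor\sigma/M\rfloor+1}$ with $\delta/(\delta+M)$ when $\delta<M/(e-1)$, and with $1/e$ when $M/(e-1)\le\delta<1/(e-1)$ -- should follow by combining the Weierstrass bound with $1-x\ge e^{-x/(1-x)}$ and a monotonicity analysis in $\sigma$.

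The hard part is this last estimate. The reduced inequality is sharp -- the first term $\delta/(\delta+M)$ is attained at $k=1$ for small $\delta$, and the second term $1/e$ is forced in the Poisson-type limit where $k\to\infty$ with the $\mu_i$ tiny and nearly equal -- so no single crude bound (AM--GM, Weierstrass, or $1-x\le e^{-x}$) is by itself adequate; they must be combined, together with the two case splits ($\delta$ below or above $M/(e-1)$, and below or above $1/(e-1)$) and the Schur-concavity reduction to the worst profile of the $\mu_i$. Two minor technical points also need care: the passage between the open event $\{Z<T\}$ and the closed tail $\{\sum_i Y_i\ge T\}$, since the extremal laws place an atom exactly at $T$ (so Samuels' conjecture should be invoked in its closed-tail form, or $T$ perturbed and a limit taken); and checking that Samuels' conjecture does apply, in the precise form needed, over the whole range $\sum_i\mu_i<T$. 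Both are routine once the main inequality is established.
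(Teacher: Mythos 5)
Your reduction is the same as the paper's: invoking Samuels' conjecture (equivalently, its extremal two-point laws) brings everything down to the inequality $\prod_{i=1}^k\bigl(1-\mu_i/(\delta+\sigma_k)\bigr)\ge\min\bigl(\delta/(\delta+M),1/e\bigr)$ for all $k$, and your treatment of the regime $\delta\ge 1/(e-1)$ via the Weierstrass bound is correct (and pleasantly elementary). But the regime $\delta<1/(e-1)$ is the entire substance of the theorem, and there you do not give a proof: after the Schur-concavity reduction to the profile with $\lfloor\sigma/M\rfloor$ coordinates equal to $M$ plus one remainder, you are left with the quantity $(\delta+\sigma-M)^{m}(\delta+mM)/(\delta+\sigma)^{m+1}$, $m=\lfloor\sigma/M\rfloor$, and you only assert that the needed comparison with $\delta/(\delta+M)$ and $1/e$ ``should follow by combining the Weierstrass bound with $1-x\ge e^{-x/(1-x)}$ and a monotonicity analysis in $\sigma$.'' That is exactly the hard step, and as sketched it is doubtful: because of the floor, the expression is not smooth in $\sigma$ and is not monotone across the integer jumps of $m$, so ``a monotonicity analysis in $\sigma$'' is not obviously available, and no concrete chain of inequalities is proposed for either subcase $\delta\lessgtr M/(e-1)$. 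So the proposal has a genuine gap precisely where the work lies.

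For comparison, the paper avoids the floor-function extremal profile altogether: it bounds each factor using the fact that $\mu\mapsto\frac{1}{\mu}\log\bigl(1-\frac{\mu}{\sigma+\delta}\bigr)$ is decreasing (Lemma \ref{Lemma:Phi-decreasing-in-mu}), which replaces the whole product by the smooth quantity $\bigl(1-\frac{M}{\sigma+\delta}\bigr)^{\sigma/M}$; it then shows $t\mapsto\Phi(t,\alpha t)$ is concave (Lemma \ref{Lemma:Phi-t-alphat-concave}), so the minimum over $\sigma\in[M,1]$ sits at an endpoint, with $\sigma=M$ giving exactly $\delta/(\delta+M)$ and $\sigma=1$ handled by the inequality $1-\frac{t}{1+t/(e-1)}\ge e^{-t}$ (Lemmas \ref{Lemma:basic-ineq} and \ref{Lemma:Phi-mu-rho-bound}). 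If you want to complete your route, you would either have to carry out the case analysis for the floor-function expression in full, or switch to a per-unit-mass comparison of the factors as the paper does, which removes the integer parameter and reduces the problem to a one-variable concavity statement plus one endpoint inequality.
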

We can have equality here sometimes.
If $M = \max_{1 \le i \le n} \mu_i = \mu_1$, and $X_2 = \dotsc = X_n = 1$, and $P(X_1 = \frac{\delta + M}{M}) = \frac{M}{\delta + M}$ and $P(X_1 = 0) = \frac{\delta}{\delta + M}$, then $P(Z < T) = \frac{\delta}{\delta + M}$.
If $M = \mu_1 = \dotsc = \mu_n = \frac{1}{n}$, and $P(X_i = n(1+\delta)) = \frac{1}{n(1+\delta)}$ and $P(X_i = 0) = 1-\frac{1}{n(1+\delta)}$ for every $i \in \{1, \dotsc, n\}$, then $P(Z<T) = (1-\frac{1}{n(1+\delta)})^n$.
Taking $n \to \infty$ shows that the conjecture is not true for any constant bigger than $\frac{1}{e}$.

Samuels \cite{Samuels} has a related conjecture.
\begin{conjecture} \label{Conj:strong}
\[
P(Z < T) \ge \min_{1 \le i \le n} \prod_{j=1}^i \left(1 - \frac{\mu_j}{T - \sum_{k=i+1}^n \mu_k}\right).
\]
\end{conjecture}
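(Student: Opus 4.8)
The plan is to read the conjectured right-hand side as the value of an optimization problem and to prove the matching lower bound. Write $V := \inf P(Z < T)$, where the infimum runs over all admissible independent families $X_1, \dots, X_n$ (each nonnegative with mean $1$) for the fixed weights $\mu_1 \ge \dots \ge \mu_n$, and abbreviate $T_i = T - \sum_{k=i+1}^n \mu_k$, so that $T_n = T$, $T_0 = \delta$, and $T_{i-1} = T_i - \mu_i$. The goal is $V = \min_{1 \le i \le n}\prod_{j=1}^i(1 - \mu_j/T_i)$. The upper bound $V \le \min_i\prod_j(\cdots)$ — i.e. sharpness — is already witnessed by the two distributions recorded after Conjecture \ref{Conj:weak}: for a fixed $i$, set $X_{i+1} = \dots = X_n = 1$ deterministically and let each $X_j$ with $j \le i$ equal $0$ with probability $1 - \mu_j/T_i$ and $T_i/\mu_j$ with probability $\mu_j/T_i$ (a valid mean-$1$ law since $T_i \ge \delta + \mu_1 > \mu_j$). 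Then $\mu_j X_j \in \{0, T_i\}$, so $\{Z < T\}$ coincides exactly with $\{X_1 = \dots = X_i = 0\}$ and $P(Z < T) = \prod_{j=1}^i(1 - \mu_j/T_i)$; the cases $i = 1$ and $i = n$ are the two displayed equality examples. So the entire content of the statement is the lower bound $P(Z < T) \ge V$, for which I must show no admissible family beats the worst of these configurations.

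First I would shrink the class of competitors. For a fixed coordinate $j$ and fixed laws of the other variables, $P(Z < T) = E[g_j(X_j)]$ is a \emph{linear} functional of the law of $X_j$, where $g_j(x) = P(\sum_{l \ne j}\mu_l X_l < T - \mu_j x)$ is a nonincreasing function of the value $x$. Minimizing a linear functional over the convex set of laws on $[0,\infty)$ of mean $1$ drives the optimizer to an extreme point, and the extreme points of this moment set are supported on at most two points. Applying this coordinate by coordinate reduces the problem to the finite-dimensional family in which every $X_j$ is two-point, $X_j \in \{0, v_j\}$ with $P(X_j = v_j) = 1/v_j$ (allowing the degenerate limit $v_j = 1$, i.e. $X_j \equiv 1$). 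The one real subtlety is attainment: the feasible set is not weakly compact, because mass can escape to $+\infty$, so I would instead prove that $V$ is a limit of two-point configurations with $v_j \to \infty$; this escape is precisely the degeneration producing the extremal families above.

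Next I would induct on $n$, for which it is convenient to drop the normalization $\sum_j \mu_j = 1$ and prove the statement for arbitrary positive weights (the thresholds $T_i$ and the product make sense verbatim). Peeling off the smallest-weight coordinate and conditioning on $X_n \in \{0, v_n\}$ gives
\[
P(Z < T) = \Bigl(1 - \tfrac{1}{v_n}\Bigr)\, P\Bigl(\textstyle\sum_{j<n}\mu_j X_j < T\Bigr) + \tfrac{1}{v_n}\, P\Bigl(\textstyle\sum_{j<n}\mu_j X_j < T - \mu_n v_n\Bigr),
\]
a convex combination of two $(n-1)$-variable instances. The recursion $T_{i-1} = T_i - \mu_i$ is tailored exactly so that the displayed product telescopes across this step. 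I would then optimize over $v_n$ (and over which coordinates remain deterministic), showing that at the optimum the nonzero value $v_n$ is pushed down to the least value forcing $Z \ge T$ whenever $X_n = v_n$, which is the threshold built into $T_i/\mu_n$; closing the induction means checking that the minimum over the inherited index for the $(n-1)$-problem, together with the $X_n$-contribution, reassembles $\min_{1 \le i \le n}\prod_{j=1}^i(1 - \mu_j/T_i)$.

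The hard part will be this last combinatorial step in full generality: controlling, uniformly in $n$, which index $i$ is optimal, and establishing the monotonicity that forces the two-point values to align into a single nested threshold so that the induction actually closes. One must rule out ``mixed'' optima in which several coordinates are simultaneously non-degenerate with incompatible thresholds, and this is exactly where the problem resists elementary analysis — it is the reason the inequality is known unconditionally only for small $n$. I would attack it through a rearrangement/monotonicity lemma showing that an optimal configuration can be taken with its non-degenerate coordinates forming an initial segment $\{1, \dots, i\}$ and satisfying $v_j = T_i/\mu_j$; once that structure is forced, the value collapses to the stated product and the outer minimum over $i$ simply selects the worst such segment.
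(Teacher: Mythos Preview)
The paper does not prove this statement: it is presented precisely as a \emph{conjecture} (Samuels' conjecture), and the paper's contribution is Theorem~\ref{Thm:main}, which takes Conjecture~\ref{Conj:strong} as a hypothesis and derives Conjecture~\ref{Conj:weak} from it. The only thing the paper establishes about Conjecture~\ref{Conj:strong} itself is the sharpness side --- the explicit families achieving each product $\prod_{j=1}^i(1-\mu_j/T_i)$ --- which is exactly your ``upper bound $V \le \min_i \prod_j(\cdots)$'' paragraph. So there is no paper proof for your attempt to be compared against.

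As for the attempt on its own terms: your reduction to two-point distributions via extremality of the mean-$1$ moment set, and the idea of peeling off one coordinate and inducting, are the classical moves (this is essentially how Samuels handled $n\le 4$). But you have correctly located, and then not resolved, the genuine obstruction. The step you label ``the hard part'' --- forcing an optimal two-point configuration to align all its non-degenerate coordinates on a common threshold $T_i$ so that the induction closes --- is the whole content of the conjecture, and your proposed ``rearrangement/monotonicity lemma'' is asserted rather than proved. Without it, the induction does not close: after conditioning on $X_n$, the two $(n-1)$-variable subproblems have \emph{different} thresholds $T$ and $T-\mu_n v_n$, and the inductive hypothesis gives two different minima over $i$ that need not recombine into the desired $n$-variable minimum unless you already control $v_n$ in the way you hope. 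In short, your proposal is a clean outline of the known partial strategy, but it is not a proof, and the paper makes no claim to have one either.
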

Here we can have equality for all $\mu_1 \ge \dotsc \ge \mu_n > 0$: Let $i \in \{1, \dotsc, n\}$, let $X_j = 1$ for every $j > i$, and if $j \le i$, then let $P(X_j = 0) = 1 - \frac{\mu_j}{T - \sum_{k=i+1}^n \mu_k}$ and $P\left(X_j = \frac{T - \sum_{k=i+1}^n \mu_k}{\mu_j}\right) = \frac{\mu_j}{T - \sum_{k=i+1}^n \mu_k}$.
Then $P(Z < T) = \prod_{j=1}^i (1 - \frac{\mu_j}{T - \sum_{k=i+1}^n \mu_k})$.

The goal of this paper is to prove the following theorem.
\begin{theorem} \label{Thm:main}
Conjecture \ref{Conj:strong} implies Conjecture \ref{Conj:weak}.
\end{theorem}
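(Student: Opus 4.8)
The plan is to assume Conjecture~\ref{Conj:strong} and deduce Conjecture~\ref{Conj:weak} by showing that the lower bound furnished by Samuels' inequality is itself $\ge\min\bigl(\frac{\delta}{\delta+M},\frac1e\bigr)$. The first step is to simplify the denominators: since $\sum_{k=1}^n\mu_k=1$ and $T=1+\delta$, we have $T-\sum_{k=i+1}^n\mu_k=\delta+S_i$ with $S_i:=\sum_{j=1}^i\mu_j$. As $S_i\ge\mu_j$ for $j\le i$, every factor $1-\frac{\mu_j}{\delta+S_i}$ is positive, and since Samuels' bound is the minimum over $i$ of the products $\prod_{j=1}^i\bigl(1-\frac{\mu_j}{\delta+S_i}\bigr)$, it suffices to prove, for each fixed $i$,
\[
\prod_{j=1}^i\Bigl(1-\frac{\mu_j}{\delta+S_i}\Bigr)\;\ge\;\min\Bigl(\frac{\delta}{\delta+M},\frac1e\Bigr).
\]

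For fixed $i$, put $m:=\frac{M}{\delta+S_i}\in(0,1)$ and $a_j:=\frac{\mu_j}{\delta+S_i}\in[0,m]$, so that $\sum_{j=1}^i a_j=\frac{S_i}{\delta+S_i}$. Concavity of $x\mapsto\log(1-x)$ on $[0,m]$ (compare with the chord joining $0$ and $m$) gives $\log(1-a_j)\ge\frac{a_j}{m}\log(1-m)$, hence $\prod_{j=1}^i(1-a_j)\ge(1-m)^{(\sum_j a_j)/m}=(1-m)^{S_i/M}$. Writing $u:=\delta/M>0$ and $t:=S_i/M\ge1$ (note $S_i\ge\mu_1=M$), one has $m=\frac1{u+t}$ and $\frac{\delta}{\delta+M}=\frac{u}{u+1}$, so the problem reduces to the one-variable inequality
\[
g(t):=\Bigl(1-\frac1{u+t}\Bigr)^t\;\ge\;\min\Bigl(\frac{u}{u+1},\frac1e\Bigr)\qquad(t\ge1,\ u>0),
\]
and one computes immediately that $g(1)=\frac{u}{u+1}$ and $\lim_{t\to\infty}g(t)=e^{-1}$.

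The core is the shape of $g$ on $[1,\infty)$. Differentiation gives $\frac{d}{dt}\log g(t)=\dfrac{\Psi(u+t)-u}{(u+t)(u+t-1)}$, where $\Psi(s):=s(s-1)\log\frac{s-1}{s}+s$. I would show $\Psi$ is strictly decreasing on $(1,\infty)$ with $\Psi(s)\to1$ as $s\to1^+$ and $\Psi(s)\to\frac12$ as $s\to\infty$ (a routine expansion of $\log(1-1/s)$): the substitution $x=1/s$ converts $\Psi'(s)<0$ into $(2-x)\log(1-x)+2x<0$ on $(0,1)$, which holds since that quantity vanishes at $x=0$ and has derivative $-\log(1-x)-\frac{x}{1-x}=-\sum_{k\ge2}\frac{k-1}{k}x^k<0$. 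Consequently $\Psi$ maps $(1,\infty)$ onto $(\frac12,1)$, so on $[1,\infty)$ the sign of $\frac{d}{dt}\log g$ --- the sign of $\Psi(u+t)-u$, with $t\mapsto\Psi(u+t)$ strictly decreasing --- changes at most once, and if it changes it passes from positive to negative (it is positive throughout when $u\le\frac12$, negative throughout when $u\ge1$). Hence $g$ is monotone, or unimodal (increasing then decreasing), on $[1,\infty)$, so $\inf_{t\ge1}g(t)=\min\bigl(g(1),\lim_{t\to\infty}g(t)\bigr)=\min\bigl(\frac{u}{u+1},\frac1e\bigr)$; combined with $\prod_{j=1}^i(1-a_j)\ge g(t)$ this is exactly the bound needed for the $i$-th product, and therefore for $P(Z<T)$.

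I expect the one real obstacle to be this monotonicity analysis. The function $g$ genuinely fails to be monotone for $\frac12<u<1$, so one cannot avoid differentiating and locating where $g'$ changes sign; the whole point of introducing $\Psi$ is to reduce that question to the monotonicity of a single explicit function, after which the elementary power-series estimate settles it and the two endpoint values of $g$ do the rest. The remaining ingredients --- rewriting $T-\sum_{k>i}\mu_k$, the chord estimate, and the limit computations --- are routine.
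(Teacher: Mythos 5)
Your proof is correct, and its first half coincides in substance with the paper's: rewriting $T-\sum_{k>i}\mu_k=\delta+S_i$ and then replacing each factor via the chord estimate $\log(1-a_j)\ge\frac{a_j}{m}\log(1-m)$ is exactly the paper's Lemma \ref{Lemma:Phi-decreasing-in-mu} (monotonicity of $s\mapsto\log(1-s)/s$), and both arguments thereby reduce the problem to the one-variable function $\sigma\mapsto\bigl(1-\frac{M}{\sigma+\delta}\bigr)^{\sigma/M}$. Where you diverge is in how this function is analyzed. The paper substitutes $t=M/\sigma\in[M,1]$, proves concavity of $t\mapsto\Phi(t,\alpha t)$ (Lemma \ref{Lemma:Phi-t-alphat-concave}), so the minimum over the compact range sits at the endpoints $\Phi(1,\delta/M)=\log\frac{\delta}{\delta+M}$ and $\Phi(M,\delta)$, and then needs a separate inequality (Lemma \ref{Lemma:Phi-mu-rho-bound}, resting on Lemma \ref{Lemma:basic-ineq}) to show the second endpoint, $\bigl(1-\frac{M}{1+\delta}\bigr)^{1/M}$, is at least $\min\bigl(\frac{\delta}{\delta+M},\frac1e\bigr)$. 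You instead work with $t=\sigma/M$ on the enlarged range $[1,\infty)$ and prove unimodality of $g(t)=\bigl(1-\frac{1}{u+t}\bigr)^t$ via the strictly decreasing auxiliary function $\Psi$, so that $\inf_{t\ge1}g=\min\bigl(g(1),\lim_{t\to\infty}g\bigr)=\min\bigl(\frac{u}{u+1},e^{-1}\bigr)$; the limit at infinity then does the work of the paper's Lemmas \ref{Lemma:basic-ineq} and \ref{Lemma:Phi-mu-rho-bound}, which your route avoids entirely. (I checked your computations: the identity $\frac{d}{dt}\log g(t)=\frac{\Psi(u+t)-u}{(u+t)(u+t-1)}$, the monotonicity of $\Psi$ via the series $-\sum_{k\ge2}\frac{k-1}{k}x^k$, and the limits $\Psi\to1$, $\Psi\to\frac12$ are all right.) The trade-off: your argument needs only one calculus lemma in place of the paper's three, at the cost of giving up the exact endpoint value at $\sigma=1$ — harmless here, since only the minimum with $\frac1e$ is wanted; the paper's concavity statement is the marginally sharper structural fact about $\Phi$ on the true range $[M,1]$.
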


\section{Proof of the theorem}

Let
\[
\Phi \colon [0,1] \times (0, \infty) \to \mbbR, \quad (\mu,\rho) \mapsto \begin{cases}
\frac{1}{\mu} \log\left(1-\frac{\mu}{1+\rho}\right) & \textrm{ if } \mu \in (0,1], \\
-\frac{1}{1+\rho} & \textrm{ if } \mu = 0.
\end{cases}
\]
During the proof we will use several lemmas related to the function $\Phi$, these are collected together in the last section.

We assume that Conjecture \ref{Conj:strong} holds.
Let $\sigma_i = \sum_{k=1}^i \mu_k$ for every $i \in \{1, \dotsc, n\}$.
Then
\[
P(Z < T) \ge \min_{1 \le i \le n} \prod_{j=1}^i \left( 1 - \frac{\mu_j}{\sigma_i + \delta} \right).
\]
for every $i \in \{1, \dotsc, n\}$.
Take an $i$ which minimizes the right hand side.
Note that $\Phi\left(\frac{\mu_j}{\sigma_i}, \frac{\delta}{\sigma_i} \right) = \frac{\sigma_i}{\mu_j} \log\left(1-\frac{\mu_j}{\sigma_i+\delta}\right)$ for every $j \in \{1, \dotsc, i\}$.
So
\[
\log P(Z < 1 + \delta) \ge \sum_{j=1}^i \log \left( 1 - \frac{\mu_j}{\sigma_i + \delta} \right) = \sum_{j=1}^i \frac{\mu_j}{\sigma_i} \Phi\left(\frac{\mu_j}{\sigma_i}, \frac{\delta}{\sigma_i} \right).
\]
Since $\sum_{j=1}^i \frac{\mu_j}{\sigma_i} = 1$, and $\Phi\left(\frac{\mu_j}{\sigma_i}, \frac{\delta}{\sigma_i} \right) \ge \Phi\left(\frac{M}{\sigma_i}, \frac{\delta}{\sigma_i} \right)$ by Lemma \ref{Lemma:Phi-decreasing-in-mu}, we obtain
\[
\log P(Z < 1 + \delta) \ge \Phi\left(\frac{M}{\sigma_i}, \frac{\delta}{\sigma_i} \right).
\]
We have $M = \mu_1 \le \sigma_i \le \sum_{j=1}^n \mu_j = 1$, so $\frac{M}{\sigma_i} \in [M, 1]$, hence
\begin{align*}
\log P(Z < 1+\delta) &\ge \min_{\sigma \in [M,1]} \Phi\left(\frac{M}{\sigma}, \frac{\delta}{\sigma}\right) = \min_{t \in [M, 1]} \Phi\left(t, \frac{\delta}{M} t\right) \\
&= \min\left(\Phi\left(1, \frac{\delta}{M}\right), \Phi(M,\delta)\right)
\end{align*}
by Lemma \ref{Lemma:Phi-t-alphat-concave}.
So
\[
P(Z < 1+\delta) \ge \min\left( \frac{\delta}{\delta+M}, \left(1-\frac{M}{1+\delta}\right)^{\frac{1}{M}} \right) \ge \min\left( \frac{\delta}{\delta+M}, \frac{1}{e} \right),
\]
where the second inequality follows from Lemma \ref{Lemma:Phi-mu-rho-bound}.
So Conjecture \ref{Conj:strong} indeed implies Conjecture \ref{Conj:weak}.

\section{A few lemmas related to the function $\Phi$}

The following simple inequality will be used in the proof of Lemma \ref{Lemma:Phi-mu-rho-bound}.
\begin{lemma} \label{Lemma:basic-ineq}
If $t \in [0,1]$, then $1-\frac{t}{1+\frac{t}{e-1}} \ge e^{-t}$.
\end{lemma}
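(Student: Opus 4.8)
The plan is to prove the elementary inequality $1 - \frac{t}{1 + \frac{t}{e-1}} \ge e^{-t}$ for $t \in [0,1]$ by rearranging it into a form amenable to single-variable calculus. First I would simplify the left-hand side: writing $c = e - 1$, one has $1 - \frac{t}{1 + t/c} = \frac{1 + t/c - t}{1 + t/c} = \frac{1 - t(1 - 1/c)}{1 + t/c}$, and multiplying numerator and denominator by $c$ gives $\frac{c - t(c-1)}{c + t} = \frac{(e-1) - t(e-2)}{(e-1) + t}$. So the claim becomes $(e-1) - (e-2)t \ge e^{-t}\bigl((e-1) + t\bigr)$ for $t \in [0,1]$, equivalently $f(t) := e^t\bigl((e-1) - (e-2)t\bigr) - (e-1) - t \ge 0$ on $[0,1]$.

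Next I would check the endpoints and then control the sign of $f$ in between. At $t = 0$: $f(0) = (e-1) - (e-1) = 0$. At $t = 1$: $f(1) = e\bigl((e-1) - (e-2)\bigr) - (e-1) - 1 = e \cdot 1 - e = 0$. So the inequality is tight at both ends, which means I cannot hope for monotonicity of $f$; instead I expect $f$ to be concave (or at least to rise then fall) on $[0,1]$, so that vanishing at both endpoints forces $f \ge 0$ in between. Differentiating, $f'(t) = e^t\bigl((e-1) - (e-2)t\bigr) + e^t\bigl(-(e-2)\bigr) - 1 = e^t\bigl((e-1) - (e-2)t - (e-2)\bigr) - 1 = e^t\bigl(1 - (e-2)t\bigr) - 1$, and $f''(t) = e^t\bigl(1 - (e-2)t\bigr) + e^t\bigl(-(e-2)\bigr) = e^t\bigl(3 - e - (e-2)t\bigr)$. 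Since $3 - e \approx 0.28 > 0$ and $e - 2 \approx 0.72 > 0$, $f''$ is positive for small $t$ and changes sign exactly once in $(0, \infty)$, at $t_0 = \frac{3-e}{e-2} \approx 0.39 \in (0,1)$; thus $f$ is convex on $[0, t_0]$ and concave on $[t_0, 1]$.

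To finish from the sign pattern of $f''$: on $[t_0, 1]$, $f$ is concave and vanishes at $t = 1$, while on $[0, t_0]$, $f$ is convex and vanishes at $t = 0$ with $f'(0) = e^0(1 - 0) - 1 = 0$. Convexity of $f$ on $[0,t_0]$ together with $f(0) = 0$, $f'(0) = 0$ gives $f \ge 0$ on $[0, t_0]$, and in particular $f(t_0) \ge 0$ and $f'(t_0) \ge 0$ (the derivative of a nonnegative convex function starting at a zero-minimum is nonnegative). Then on $[t_0, 1]$, $f$ is concave with $f(t_0) \ge 0$ and $f(1) = 0$; a concave function on an interval lies above the chord joining its endpoint values, and since both endpoint values are $\ge 0$, we get $f \ge 0$ on $[t_0, 1]$ as well. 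Hence $f \ge 0$ on all of $[0,1]$, which is the claim. The only mild obstacle is making the convexity/concavity bookkeeping airtight — in particular verifying $f'(t_0) \ge 0$ rigorously — but this follows cleanly from $f$ being convex and nonnegative on $[0,t_0]$ with a zero at the left endpoint; alternatively one can just compute $f'(t_0) = e^{t_0}(1 - (e-2)t_0) - 1 = e^{t_0}(1 - (3-e)) - 1 = (e-2)e^{t_0} - 1$ and check this is positive since $e^{t_0} > 1 > \frac{1}{e-2}$ is false — rather $e-2 \approx 0.718$ so $\frac{1}{e-2}\approx 1.39$ and $e^{t_0} \approx e^{0.39} \approx 1.48 > 1.39$, so indeed $f'(t_0) > 0$.
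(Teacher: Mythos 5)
Your proof is correct and follows essentially the same route as the paper: after clearing denominators your function $f(t)=e^t\bigl((e-1)-(e-2)t\bigr)-(e-1)-t$ is just $(e-1)e^t$ times the paper's $f(t)=\bigl(1+\tfrac{t}{e-1}\bigr)(1-e^{-t})-t$, and both arguments use $f(0)=f(1)=0$, $f'(0)=0$, and the single sign change of $f''$ (convex then concave) to conclude $f\ge 0$ on $[0,1]$. The side remarks about $f'(t_0)$ are unnecessary (and a bit muddled) but harmless, since the concavity-plus-nonnegative-endpoints step already closes the argument.
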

\begin{proof}
We need to show $1-e^{-t} \ge \frac{t}{1+\frac{t}{e-1}}$, or equivalently $(1+\frac{t}{e-1})(1-e^{-t}) \ge t$.
Let $f \colon \mbbR \to \mbbR$, $t \mapsto (1+\frac{t}{e-1})(1-e^{-t}) - t$.
Then $f$ is a smooth function, $f(0) = 0$, $f'(t) = \frac{1}{e-1} e^{-t} ((e-2)(1-e^t) + t)$ and $f''(t) = \frac{1}{e-1} e^{-t} (3-e-t)$.
So $f$ is convex in the interval $[0,3-e]$, and concave in the interval $[3-e,1]$.
Moreover $f'(0) = 0$, so $f$ is monotone increasing in the interval $[0,3-e]$, hence $f(t) \ge f(0) = 0$ for $t \in [0, 3-e]$, so in particular $f(3-e) \ge 0$.
Since $f$ is concave in the interval $[3-e,1]$, and $f(3-e) \ge 0$ and $f(1) = 0$, we get that $f(t) \ge 0$ for every $t \in [3-e, 1]$.
So indeed $f(t) \ge 0$ for every $t \in [0,1]$.
\end{proof}

The following lemma states that $\Phi$ is strictly decreasing in its first variable.
\begin{lemma} \label{Lemma:Phi-decreasing-in-mu}
If $\rho \in (0, \infty)$, $\lambda, \mu \in [0,1]$ and $\lambda < \mu$, then $\Phi(\mu,\rho) < \Phi(\lambda,\rho)$.
\end{lemma}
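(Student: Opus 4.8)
The plan is to fix $\rho$ and reduce everything to showing that the one-variable function $\mu \mapsto \Phi(\mu,\rho)$ is \emph{strictly} decreasing on $[0,1]$; the claim for arbitrary $\lambda < \mu$ is then immediate. Write $c = 1+\rho$. Since $\rho > 0$ we have $c > 1$, so for $\mu \in [0,1]$ the argument $\mu/c$ lies in $[0,1/c) \subseteq [0,1)$, which is precisely the range in which the logarithm is well-behaved.

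The cleanest route is via the power series $-\log(1-x) = \sum_{k \ge 1} x^{k}/k$, valid for $x \in [0,1)$. Substituting $x = \mu/c$ and dividing by $\mu$ gives, for $\mu \in (0,1]$,
\[
\Phi(\mu,\rho) = -\sum_{k=1}^{\infty} \frac{\mu^{k-1}}{k\,c^{k}}.
\]
At $\mu = 0$ the right-hand side collapses to $-1/c = -\tfrac{1}{1+\rho}$, which is exactly the value assigned to $\Phi(0,\rho)$ by definition; hence the displayed identity holds on all of $[0,1]$, and this is also where one verifies that the two cases in the definition of $\Phi$ fit together. Now every summand $\mu^{k-1}/(k\,c^{k})$ is a non-decreasing function of $\mu \ge 0$, and the $k=2$ term $\mu/(2c^{2})$ is strictly increasing; therefore the whole sum is strictly increasing in $\mu$ on $[0,1]$, so $\Phi(\cdot,\rho)$ is strictly decreasing there, and in particular $\lambda < \mu$ forces $\Phi(\mu,\rho) < \Phi(\lambda,\rho)$.

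If one prefers to avoid series, the alternative is to differentiate: for $\mu \in (0,1]$ one computes
\[
\frac{\partial}{\partial \mu}\,\Phi(\mu,\rho) = \frac{1}{\mu^{2}}\left(-\log\!\left(1-\tfrac{\mu}{c}\right) - \frac{\mu/c}{1-\mu/c}\right),
\]
and the bracketed quantity is negative because of the elementary inequality $-\log(1-x) < \tfrac{x}{1-x}$ for $x \in (0,1)$ — the difference $\tfrac{x}{1-x}+\log(1-x)$ vanishes at $0$ and has positive derivative $x/(1-x)^2$. This yields strict monotonicity on $(0,1]$, which then has to be pushed across the endpoint $\mu = 0$ using continuity of $\Phi$ at $0$. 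I expect the only mildly delicate point in either approach to be exactly this treatment of the boundary value $\mu = 0$: one must confirm that the \emph{strict} inequality, not merely the weak one, persists there. The power-series formulation is attractive precisely because it dispenses with that case distinction automatically.
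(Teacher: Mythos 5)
Your proposal is correct and is essentially the paper's own argument: the paper also expands $\Phi(\mu,\rho)=\frac{1}{1+\rho}\varphi\bigl(\frac{\mu}{1+\rho}\bigr)$ with $\varphi(s)=-\sum_{k\ge 0}\frac{s^k}{k+1}$ and reads off strict monotonicity from the power series, which handles $\mu=0$ without a separate case. Your series identity is the same expansion written out termwise, so there is nothing substantively different to flag.
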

\begin{proof}
Let
\[
\varphi \colon [0,1) \to \mbbR, \quad s \mapsto -\sum_{k \ge 0} \frac{s^k}{k+1} = \begin{cases}
\frac{\log(1-s)}{s} & \textrm{ if } s \in (0,1) \\
-1 & \textrm{ if } s = 0.
\end{cases}
\]
The series expansion clearly shows that $\varphi$ is strictly decreasing in $[0,1)$.
Thus
\[
\Phi(\mu, \rho) = \frac{1}{1+\rho} \varphi\left(\frac{\mu}{1+\rho}\right) < \frac{1}{1+\rho} \varphi\left(\frac{\lambda}{1+\rho}\right) = \Phi(\lambda, \rho).
\]
\end{proof}

The following lemma gives a lower bound for $\Phi$, which is used at end of the proof of Theorem \ref{Thm:main}.
\begin{lemma} \label{Lemma:Phi-mu-rho-bound}
If $\mu \in [0,1]$ and $\rho \in (0, \infty)$, then
\[
\Phi(\mu,\rho) \ge \log \left(\min\left(\frac{\rho}{\mu+\rho}, \frac{1}{e}\right)\right).
\]
\end{lemma}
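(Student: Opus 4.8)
The plan is to rewrite $\Phi$ in logarithmic form and prove the equivalent inequality
\[
\left(1-\frac{\mu}{1+\rho}\right)^{1/\mu} \ge \min\left(\frac{\rho}{\mu+\rho},\, \frac{1}{e}\right)
\]
for $\mu \in (0,1]$ and $\rho > 0$; the case $\mu = 0$ is immediate, since $\Phi(0,\rho) = -\frac{1}{1+\rho} \ge -1 = \log\frac{1}{e}$ and the minimum then equals $\frac{1}{e}$. I would split into two regimes according to which term realizes the minimum, using that $\frac{\rho}{\mu+\rho} \ge \frac{1}{e}$ holds precisely when $\mu \le (e-1)\rho$.

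In the regime $\mu \le (e-1)\rho$ the minimum is $\frac{1}{e}$, so it suffices to prove $\Phi(\mu,\rho) \ge -1$. Here $1+\rho \ge 1+\frac{\mu}{e-1}$, whence $1-\frac{\mu}{1+\rho} \ge 1-\frac{\mu}{1+\mu/(e-1)} \ge e^{-\mu}$ by Lemma \ref{Lemma:basic-ineq} (with $t=\mu$); taking logarithms and dividing by $\mu$ gives $\Phi(\mu,\rho) \ge -1$.

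In the regime $\mu \ge (e-1)\rho$ the minimum is $\frac{\rho}{\mu+\rho}$, and the target becomes $\Phi(\mu,\rho) \ge \log\frac{\rho}{\mu+\rho}$. I would fix $\mu$ and study $g(r) := \Phi(\mu,r) - \log\frac{r}{\mu+r}$ for $r \in \bigl(0,\frac{\mu}{e-1}\bigr]$. A short computation yields
\[
g'(r) = \frac{1}{(1+r)(1+r-\mu)} - \frac{\mu}{r(\mu+r)},
\]
and clearing denominators shows $g'(r) \le 0$ is equivalent to $(1-\mu)\bigl(\mu(1+r)-r^2\bigr) \ge 0$; this holds throughout the interval because $\mu \le 1$ and $\mu(1+r)-r^2 \ge (e-1)r(1+r)-r^2 = r\bigl((e-1)+(e-2)r\bigr) > 0$. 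Hence $g$ is non-increasing, so $g(\rho) \ge g\bigl(\tfrac{\mu}{e-1}\bigr) = \Phi\bigl(\mu,\tfrac{\mu}{e-1}\bigr) - \log\tfrac{1}{e} = \Phi\bigl(\mu,\tfrac{\mu}{e-1}\bigr) + 1$, and the last quantity is $\ge 0$ by the same application of Lemma \ref{Lemma:basic-ineq} as above, now evaluated at the crossover value $r = \frac{\mu}{e-1}$. This settles the second regime.

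I expect the second regime to be the only real obstacle: the bound $\Phi(\mu,\rho) \ge \log\frac{\rho}{\mu+\rho}$ is an equality at $\mu = 1$ and becomes tight again along $\rho = \frac{\mu}{e-1}$ as $\mu \to 0$, so no loose estimate can work. The decisive point is that, for fixed $\mu$, the slack is monotone in $\rho$, which collapses the whole region onto the single curve $\rho = \frac{\mu}{e-1}$ and thus reduces matters to Lemma \ref{Lemma:basic-ineq}; concretely, the step that needs care is the sign analysis of $g'$, i.e. recognizing the factorization $(1-\mu)\bigl(\mu(1+r)-r^2\bigr)$.
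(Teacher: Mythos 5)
Your proposal is correct and follows essentially the same route as the paper: treat $\mu=0$ separately, split at the crossover $\rho=\frac{\mu}{e-1}$, reduce the large-$\rho$ regime to Lemma \ref{Lemma:basic-ineq} via monotonicity in $\rho$, and in the small-$\rho$ regime show $g(\rho)=\Phi(\mu,\rho)-\log\frac{\rho}{\mu+\rho}$ is decreasing using exactly the factorization $(1-\mu)\bigl(\mu(1+\rho)-\rho^2\bigr)$ of the numerator of $-g'$. The only cosmetic difference is that you verify monotonicity in the first regime directly from $1-\frac{\mu}{1+\rho}$ rather than from the sign of $\partial_2\Phi$.
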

\begin{proof}
If $\mu = 0$, then $\Phi(\mu,\rho) = -\frac{1}{1+\rho} > -1 = \log \frac{1}{e}$.
If $\mu = 1$, then $\Phi(\mu,\rho) = \log \frac{\rho}{1+\rho} = \log \frac{\rho}{\mu+\rho}$.
Now fix a $\mu \in (0,1)$.
Note that $\Phi$ is smooth in $(0,1) \times (0,\infty)$.
Let $\partial_1 \Phi$ and $\partial_2 \Phi$ denote the partial derivatives of $\Phi$ with respect to the first and second variable.
We have $(\partial_2 \Phi)(\mu,\rho) = \frac{1}{(1+\rho)(1+\rho-\mu)} > 0$.

First suppose that $\rho \in [\frac{\mu}{e-1}, \infty)$.
Then $\frac{\rho}{\mu+\rho} \ge \frac{1}{e}$, so we need to show that $\Phi(\mu, \rho) \ge -1$.
Since $\rho \mapsto \Phi(\mu,\rho)$ is monotone increasing in $[\frac{\mu}{e-1}, \infty)$, we have $\Phi(\mu,\rho) \ge \Phi(\mu,\frac{\mu}{e-1}) = \frac{1}{\mu} \log\left( 1 - \frac{\mu}{1+\frac{\mu}{e-1}} \right) \ge -1$ by Lemma \ref{Lemma:basic-ineq}.

Now suppose that $\rho \in (0,\frac{\mu}{e-1}]$.
Then $\frac{\rho}{\mu+\rho} \le \frac{1}{e}$, so we need to show that $\Phi(\mu, \rho) \ge \log \frac{\rho}{\mu+\rho}$.
Let $g \colon (0,\infty) \to \mbbR$, $\rho \mapsto \Phi(\mu, \rho) - \log \frac{\rho}{\mu+\rho}$.
Then $g'(\rho) = -\frac{(1-\mu)((1+\rho)\mu-\rho^2)}{\rho(1+\rho)(1+\rho-\mu)(\mu+\rho)}$.
We have $\rho \le \frac{\mu}{e-1} < \mu$, so $(1+\rho)\mu - \rho^2 > (1+\rho)\rho - \rho^2 = \rho > 0$, hence $g'(\rho) < 0$ for every $\rho \in (0,\frac{\mu}{e-1}]$.
So $g(\rho) \ge g(\frac{\mu}{e-1}) = \Phi(\mu, \frac{\mu}{e-1}) + 1 \ge 0$ by the previous paragraph.
\end{proof}

The following lemma states a property of the function $\Phi$ which is essential for the proof of Theorem \ref{Thm:main}.
\begin{lemma} \label{Lemma:Phi-t-alphat-concave}
If $\alpha \in (0, \infty)$, then $h_{\alpha} \colon [0,1] \to \mbbR$, $t \mapsto \Phi(t, \alpha t)$ is a concave function.
\end{lemma}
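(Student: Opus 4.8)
The plan is to deduce concavity of $h_\alpha$ from the inequality $h_\alpha''(t)\le 0$ on $(0,1)$. First note that $h_\alpha$ is smooth on $(0,1]$ and extends continuously to $t=0$: with the function $\varphi$ from the proof of Lemma \ref{Lemma:Phi-decreasing-in-mu} we have $\Phi(t,\alpha t)=\frac{1}{1+\alpha t}\,\varphi\bigl(\tfrac{t}{1+\alpha t}\bigr)$, and continuity of $\varphi$ at $0$ gives $h_\alpha(t)\to\varphi(0)=-1$ as $t\to 0^+$. Since a function that is continuous on $[0,1]$ and has non-positive second derivative on $(0,1)$ is concave on $[0,1]$, it suffices to prove $h_\alpha''(t)\le 0$ for $t\in(0,1)$.

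To reach $h_\alpha''$ I would first put $h_\alpha$ in a convenient algebraic form. Since $1-\frac{t}{1+\alpha t}=\frac{1+(\alpha-1)t}{1+\alpha t}$, and both $1+(\alpha-1)t$ and $1+\alpha t$ are positive on $[0,1]$ (the first because $\alpha>0$ and $t\le 1$ force $1+(\alpha-1)t\ge\min(1,\alpha)$), we may write $h_\alpha(t)=L(t)/t$ on $(0,1]$ with $L(t)=\log\bigl(1+(\alpha-1)t\bigr)-\log(1+\alpha t)$, which is smooth on a neighbourhood of $[0,1]$ and satisfies $L(0)=0$.

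The crux is an identity that trivialises the sign computation. A direct calculation gives $h_\alpha''(t)=R(t)/t^3$ where $R(t)=t^2L''(t)-2tL'(t)+2L(t)$, and $R$ has the convenient properties that $R'(t)=t^2L'''(t)$ while $R(t)\to 0$ as $t\to 0^+$ (each of its three terms does, since $L(0)=0$). Hence $R(t)=\int_0^t\tau^2L'''(\tau)\,d\tau$, and the whole lemma reduces to checking $L'''(t)\le 0$ on $(0,1)$. Here $L'''(t)=2\bigl[\bigl(\tfrac{\alpha-1}{1+(\alpha-1)t}\bigr)^3-\bigl(\tfrac{\alpha}{1+\alpha t}\bigr)^3\bigr]$, and I would finish by observing that, for fixed $t\in(0,1)$, the map $c\mapsto\frac{c}{1+ct}$ is strictly increasing on $(-1/t,\infty)$ (its derivative is $(1+ct)^{-2}$), that $-1/t<\alpha-1<\alpha$ because $\alpha>0$, and that $x\mapsto x^3$ is increasing; composing these gives $\bigl(\tfrac{\alpha-1}{1+(\alpha-1)t}\bigr)^3<\bigl(\tfrac{\alpha}{1+\alpha t}\bigr)^3$, so $L'''(t)<0$. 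Therefore $R\le 0$ and $h_\alpha''\le 0$ on $(0,1)$, and $h_\alpha$ is concave.

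I do not anticipate a real obstacle. The only points requiring a bit of care are keeping $1+(\alpha-1)t>0$ throughout the closed interval (which genuinely uses $t\le 1$) and dealing with the endpoint $t=0$ by continuity rather than differentiation. The single piece of cleverness is the identity $R'(t)=t^2L'''(t)$, which replaces the sign of $h_\alpha''$ by the sign of the third derivative of a difference of two logarithms — and that is elementary.
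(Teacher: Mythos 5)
Your proof is correct, and although its skeleton matches the paper's --- both arguments study $t^3h_\alpha''(t)$, show it tends to $0$ as $t\to 0^+$, and show it is strictly decreasing on $(0,1)$ --- the way you establish the decrease is a genuinely different and cleaner route. Your $R(t)=t^2L''(t)-2tL'(t)+2L(t)$ is exactly the paper's $\eta(t)=t^3h_\alpha''(t)$, and your identity $R'(t)=t^2L'''(t)$ recovers the paper's explicit formula for $\eta'$: with $a=(\alpha-1)(1+\alpha t)$ and $b=\alpha(1+(\alpha-1)t)$ one has $b-a=1$, hence $a^3-b^3=-\tfrac14\bigl(1+3(a+b)^2\bigr)$, which is precisely the numerator $-\tfrac12\bigl(1+3(1-2\alpha+2(\alpha-\alpha^2)t)^2\bigr)$ appearing there. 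The paper gets this by differentiating $\eta$ head-on and reading the sign off the manifest $1+3(\cdot)^2$ form; you never need the closed form, because writing $h_\alpha=L/t$ with $L$ a difference of two logarithms reduces everything to $L'''<0$, which follows from the elementary monotonicity of $c\mapsto c/(1+ct)$ on $(-1/t,\infty)$ together with $-1/t<\alpha-1<\alpha$ and the monotonicity of the cube. Your route buys a computation-free sign check; the paper's buys a self-contained explicit formula with no appeal to the structure of $L$ --- but the endgame ($R(0^+)=0$ plus $R'<0$, versus $\eta(0^+)=0$ plus $\eta'<0$) is identical. One small point common to both treatments: $h_\alpha(0)$ literally means $\Phi(0,0)$, which lies outside the stated domain of $\Phi$; your identification $h_\alpha(0)=\varphi(0)=-1$ is the natural intended extension, so this is a wrinkle of the statement rather than a gap in your argument.
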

\begin{proof}
The continuity of $\Phi$ in $[0,1] \times (0,\infty)$, and the smoothness of $\Phi$ in $(0,1) \times (0,\infty)$ implies the continuity of $h_{\alpha}$ in $[0,1]$, and the smoothness of $h_{\alpha}$ in $(0,1)$.
So it is enough to prove that $h_{\alpha}''(t) \le 0$ for every $t \in (0,1)$.
Let $t \in (0,1)$, then $h_{\alpha}(t) = \frac{1}{t} \log\left( 1 - \frac{t}{1 + \alpha t} \right)$.
One can check that
\[
\eta(t) := t^3 h''_{\alpha}(t) = \frac{t(2 + (6\alpha-3)t + 4(\alpha^2-\alpha)t^2)}{(1 - t + \alpha t)^2 (1 + \alpha t)^2} + 2 \log\left( 1 - \frac{t}{1 + \alpha t} \right).
\]
It is enough to show that $\eta(t) < 0$ for every $t \in (0,1)$.
Taking the derivative, we obtain
\[
\eta'(t) = -\frac{t^2 (1 + 3 (1 - 2 \alpha + 2(\alpha-\alpha^2)t)^2)}{2(1 - t + \alpha t)^3 (1 + \alpha t)^3}.
\]
Clearly $\eta'(t) < 0$.
So $\eta$ is strictly decreasing in the interval $(0,1)$, hence $\eta(t) < \lim_{s \to 0} \eta(s) = 0$ for every $t \in (0,1)$.
\end{proof}

\end{document}